\newcommand{\Z}{\mathbb Z}
\newcommand{\sm}[4]{\begin{psmallmatrix}#1&#2\\#3&#4 \end{psmallmatrix}}
\newcommand{\set}[1]{\left\{#1\right\}}
\newcommand{\inv}{^{-1}}
\newcommand{\paren}[1]{\left(#1\right)}
\newcommand{\ceil}[1]{\left\lceil#1\right\rceil}
\newtheorem{theorem}{Theorem}
\newtheorem{lemma}[theorem]{Lemma}
\title[Coefficients of modular functions]{Congruences for coefficients of level 2 modular functions with poles at 0}
\author{Paul Jenkins}
\email{jenkins@math.byu.edu}
\author{Ryan Keck}
\email{tehrmc08@gmail.com}
\author{Eric Moss}
\email{ericbm2@byu.edu}
\thanks{This work was partially supported by a grant from the Simons Foundation ($\# 281876$ to Paul Jenkins).}
\begin{document}

\begin{abstract}
We give congruences modulo powers of 2 for the Fourier coefficients of certain level 2 modular functions with poles only at 0, answering a question posed by Andersen and the first author. The congruences involve a modulus that depends on the binary expansion of the modular form's order of vanishing at $\infty$.
\end{abstract}

\maketitle

\section{Introduction}
A modular form $f(z)$ of level $N$ and weight $k$ is a function which is holomorphic on the upper half plane, satisfies the equation
\[f\paren{\frac{az+b}{cz+d}} = (cz+d)^k f(z) \text{ for all } \sm{a}{b}{c}{d} \in \Gamma_0(N),\]
and is holomorphic at the cusps of $\Gamma_0(N).$ Letting $q=e^{2 \pi i z},$ these functions have Fourier series representations of the form $f(z) = \sum_{n = 0}^\infty a(n)q^n.$ A {\em weakly holomorphic} modular form is a modular form that is allowed to be meromorphic at the cusps. We define $M_k^\sharp (N)$ to be the space of weakly holomorphic modular forms of weight $k$ and level $N$ that are holomorphic away from the cusp at $\infty,$ and define $M_k^\flat(N)$ similarly, but for forms holomorphic away from the cusp at 0.

The coefficients of many modular forms have interesting arithmetic properties; for instance, the coefficients $c(n)$ of the $j$-invariant $j(z) = q \inv + 744 + \sum_{n=1}^\infty c(n)q^n$ appear as linear combinations of dimensions of irreducible representations of the Monster group. Also, modular form coefficients often satisfy certain congruences. Lehner \cite{lehner1,lehner2} proved that the $c(n)$ satisfy the congruence
\[c(2^a3^b5^c7^dn) \equiv 0 \pmod{2^{3a+8}3^{2b+3}5^{c+1}7^d}.\]
Many others have obtained similar congruences, in particular for forms in the space $M_0^\sharp(N).$ Kolberg \cite{kolberg1,kolberg2}, Aas \cite{aas}, and Allatt and Slater \cite{allatt} strengthened Lehner's congruences for $j(z)$, and Griffin \cite{griffin} extended Kolberg's and Aas's results to all elements of a canonical basis for $M_0^\sharp(1).$ The first author, Andersen, and Thornton \cite{andersen,thornton1} proved congruences for Fourier coefficients of canonical bases for $M_0^\sharp(p)$ with $p = 2,$ $3,$ $5,$ and $7.$ A natural question is whether similar congruences hold for coefficients of canonical bases for spaces where we allow poles at some other cusp.

Recall that the cusps of $\Gamma_0(2)$ are $0$ and $\infty.$ Taking $\eta(z) = q^{\frac{1}{24}} \prod_{n=1}^\infty \left(1 - q^n \right)$ to be the Dedekind eta function, a Hauptmodul for $\Gamma_0(2)$ is
\[\phi(z) = \paren{\frac{\eta(2z)}{\eta(z)}}^{24} =  q+24q^2+\cdots,\]
which vanishes at $\infty$ and has a pole only at 0. Note that the functions $\phi(z)^m$ for $m \geq 0$ are a basis for $M_0^\flat(2).$ Andersen and the first author used powers of $\phi(z)$ to prove congruences involving $ \psi = \frac{1}{\phi} = q\inv - 24 + \cdots \in M_0^\sharp(2)$ in \cite{andersen}, and made the following remark: ``Additionally, it appears that powers of the function $[\phi(z)]$ have Fourier coefficients with slightly weaker divisibility properties... It would be interesting to more fully understand these congruences.''  In this paper, we prove congruences for these Fourier coefficients.

Write $\phi(z)^m$ as $\sum_{n=m}^\infty a(m,n)q^n$. The main result of this paper is the following theorem.

\begin{restatable}{theorem}{congruence}
\label{congruence}
Let $n = 2^\alpha n'$ where $2 \nmid n'.$ Express the binary expansion of $m$ as $a_r\dots a_2a_1,$ and consider the rightmost $\alpha$ digits $a_\alpha\dots a_2 a_1,$ letting $a_i = 0$ for $i > r$ if $\alpha > r.$ Let $i'$ be the index of the rightmost 1, if it exists. Let
\[
\gamma(m,\alpha) =
\begin{cases}
\#\set{i\ | \ a_i = 0, i > i'}+1 &\text{if } i' \text{ exists},\\
0 &\text{otherwise.}
\end{cases}
\]
Then
\[a(m,2^\alpha n') \equiv 0 \pmod{2^{3\gamma(m,\alpha)}}.\]
\end{restatable}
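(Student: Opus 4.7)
The plan is to prove Theorem~\ref{congruence} by strong induction on $m$, using Dwork's lemma as the main engine. Recall that Dwork's lemma states: for $F(q) \in 1 + q\Z[[q]]$ and integer $j \geq 1$,
\[
F(q)^{2^j} \equiv F(q^2)^{2^{j-1}} \pmod{2^j}.
\]
Writing $m = 2^s m''$ with $m''$ odd and applying Dwork's lemma to $F(q) = (1+q)^{3m''}$ with $j = s + 3$ (so that $F(q)^{2^j} = (1+q)^{24m}$), one obtains
\[
(1+q)^{24m} \equiv (1+q^2)^{12m} \pmod{2^{s+3}}.
\]
Substituting $q \mapsto q^n$, taking the product over $n \geq 1$, and multiplying by $q^m$ then yields
\[
\phi^m(q) \equiv q^m \prod_{n \geq 1}(1+q^{2n})^{12m} \pmod{2^{s+3}}.
\]
When $m$ is even this right-hand side equals $\phi^{m/2}(q^2)$; when $m$ is odd (so $s=0$), it is $q^m$ times a series in $q^2$, and hence every coefficient of $\phi^m$ at a position of parity opposite to $m$'s is divisible by $8$. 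This already proves the case $\gamma(m,\alpha) = 1$ of the theorem.

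For the induction, I would split on the parity of $m$. When $m = 2m'$, the congruence $\phi^m(q) \equiv \phi^{m'}(q^2) \pmod{2^{s+3}}$ reduces the claim for $\phi^m$ at $q^{2^\alpha n'}$ to the claim for $\phi^{m'}$ at $q^{2^{\alpha-1}n'}$, which matches the combinatorial identity $\gamma(2m',\alpha) = \gamma(m',\alpha-1)$. When $m$ is odd, the factorization $\phi^m = \phi\cdot\phi^{m-1}$ combined with the case $m=1$ and the inductive hypothesis for $\phi^{m-1}$ handles this case, with the ``$+1$'' in the definition of $\gamma(m,\alpha)$ accounting for the extra factor of $\phi$ that contributes a further $2^3$ whenever the rightmost bit of $m$ is a $1$.

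The main obstacle is that the raw Dwork congruence above gives only modulus $2^{s+3}$, which is independent of $\alpha$, whereas the theorem demands the modulus $2^{3\gamma(m,\alpha)}$ that grows with $\alpha$. To extract the extra divisibility one must iterate, producing the telescoping chain
\[
\phi^m(q) \equiv \phi^{m/2}(q^2) \equiv \phi^{m/4}(q^4) \equiv \cdots \equiv \phi^{m''}(q^{2^s}),
\]
and then apply the inductive hypothesis at the odd level $m''$, where the base-case divisibility kicks back in. The technical heart of the proof lies in tracking how the 2-adic valuations of the successive error terms in this telescoping compound so as to produce exactly the exponent $3\gamma(m,\alpha)$: the factor of $3$ reflects the $2^3$ in $24 = 2^3 \cdot 3$, while the integer $\gamma$ measures the effective depth of the telescoping, which is in turn encoded by the binary digits of $m$ through the identity $\gamma(m,\alpha) = \alpha - v_2(m) + 1 - s_2(m \bmod 2^\alpha)$ (valid when $v_2(m) < \alpha$).
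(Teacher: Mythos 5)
There is a genuine gap, and it is structural rather than technical. Your Dwork-lemma computation is fine: with $m=2^{s}m''$, $m''$ odd, one does get $\phi^{m}(q)\equiv q^{m}\prod_{n\ge1}(1+q^{2n})^{12m}\pmod{2^{s+3}}$, hence $\phi^{m}(q)\equiv\phi^{m/2}(q^{2})\pmod{2^{s+3}}$ for even $m$, and $\nu_2(a(m,2n'))\ge 3$ for odd $m$. But the modulus $2^{s+3}$ is bounded independently of $\alpha$, while the theorem's modulus $2^{3\gamma(m,\alpha)}$ is unbounded in $\alpha$ for fixed $m$ (e.g.\ $\gamma(1,\alpha)=\alpha$, so already for $m=1$ you must prove $\nu_2(a(1,2^{\alpha}n'))\ge 3\alpha$). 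Your induction is on $m$, and nothing in it makes the exponent grow with $\alpha$: the telescoping chain actually degrades, since $2^{s}\,\|\,m$ forces $2^{s-1}\,\|\,m/2$, so the successive congruences hold modulo $2^{s+3},2^{s+2},\dots,2^{4}$ and the composite $\phi^{m}(q)\equiv\phi^{m''}(q^{2^{s}})$ is only valid modulo $2^{4}$. Likewise the reduction $a(2m',2^{\alpha}n')\equiv a(m',2^{\alpha-1}n')\pmod{2^{s+3}}$ cannot transfer a divisibility by $2^{3\gamma(m',\alpha-1)}$ once $3\gamma(m',\alpha-1)>s+3$. The ``technical heart'' you defer --- tracking the error terms so that they compound to $3\gamma(m,\alpha)$ --- is therefore not a verification but the entire content of the theorem, and there is no reason the Dwork error series has extra divisibility at the exponents $2^{\alpha}n'$. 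A second concrete problem is the odd step: $\phi^{m}=\phi\cdot\phi^{m-1}$ gives $a(m,N)=\sum_{k}a(1,k)a(m-1,N-k)$, and for $k$ an odd square the factor $a(1,k)$ is odd (Theorem \ref{oddsquares}) while $N-k$ is odd, so the inductive hypothesis gives nothing for that term; the claimed divisibility of the sum would have to come from cancellation your argument does not see.

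The paper avoids both problems by inducting on $\alpha$ rather than on $m$, with the $U_2$ operator as the engine: $U_2\phi^{j}$ is an integer polynomial in $\phi$ supported in degrees $\lceil j/2\rceil$ through $2j$, whose coefficient of $\phi^{i}$ has $2$-adic valuation at least $8(i-\lceil j/2\rceil)$ plus an extra $3$ exactly when $j$ is odd (Lemma \ref{polyrelation}, Lemma \ref{polynomial}, and the Newton's-identities recursion for $S_m$). Iterating $U_2$ $\alpha$ times and reading off the bottom coefficient then accumulates one factor of $2^{3}$ for each odd entry of the list $m,f(m),\dots,f^{\alpha-1}(m)$ with $f(\ell)=\lceil\ell/2\rceil$, and Lemma \ref{binarygammma} identifies that count with $\gamma(m,\alpha)$. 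This is the mechanism that produces growth in $\alpha$, and it (or something playing the same role, such as the stronger Allatt--Slater machinery for $m=1$) is missing from your proposal.
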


\noindent
That the structure of the binary expansion of $m$ appears in the modulus of this congruence is a surprising result. We note that this congruence is not sharp. For $m = 1,$ Allatt and Slater in \cite{allatt} proved a stronger result that provides an exact congruence for many $n$.

As an example, the binary expansion for $m = 40$ is $m = \cdots 000101000.$ As we increment $\alpha,$ the $\gamma$ function gives the values in Table \ref{gammaexample}.
\begin{table}[H]
\label{gammaexample}
\begin{tabular}{|c||c|c|c|c|c|c|c|c|c|c|c|c|c|c|}
\hline
$\alpha$            & 0 & 1 & 2 & 3 & 4 & 5 & 6 & 7 & 8 & 9 & $\cdots$ & $\alpha$     & $\cdots$\\
\hline
$\gamma(40,\alpha)$ & 0 & 0 & 0 & 0 & 1 & 2 & 2 & 3 & 4 & 5 & $\cdots$ & $\alpha - 4$ & $\cdots$\\ \hline
\end{tabular}
\caption{Values of $\gamma(m,\alpha)$ for $m = 40$}
\end{table}
\noindent
Notice that once $\alpha$ surpasses 6---and the leftmost 1 in the binary expansion of $m$ occurs in the 6th place---$\gamma$ always increases by 1 as $\alpha$ increases by 1. This illustrates that $\gamma(m,\alpha)$ is unbounded for a fixed $m$.

We also prove the following result on the parity of $a(1,n).$

\begin{restatable}{theorem}{oddsquares}
\label{oddsquares}
The $n$th coefficient $a(1,n)$ of $\phi(z)$ is odd if and only if $n$ is an odd square.
\end{restatable}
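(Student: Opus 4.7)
The plan is to work entirely modulo $2$: I first reduce $\phi$ to a clean infinite product, then recognize that product as congruent modulo $2$ to the generating function of the odd squares via a classical Jacobi theta identity.

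To start, I would rewrite $\phi(z)$ as an infinite product by cancelling the eta ratio through $(1-q^{2n})/(1-q^n) = 1+q^n$:
\[
\phi(z) = \paren{\frac{\eta(2z)}{\eta(z)}}^{24} = q\prod_{n=1}^\infty (1+q^n)^{24}.
\]
The Frobenius-type congruence $(1+q^n)^{2^k} \equiv 1 + q^{2^k n} \pmod{2}$, proved by induction from $(1+q^n)^2 \equiv 1+q^{2n} \pmod{2}$, combined with the binary expansion $24 = 16 + 8$, collapses the exponent to give
\[
\phi(z) \equiv q\prod_{n=1}^\infty(1+q^{8n})(1+q^{16n}) \pmod{2}.
\]

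The second step is to identify this product mod $2$ with $\sum_{m \geq 0} q^{(2m+1)^2}$. I would specialize $z = q$ in the Jacobi triple product
\[
\sum_{n\in\Z} z^n q^{n^2} = \prod_{n=1}^\infty (1-q^{2n})(1+zq^{2n-1})(1+z^{-1}q^{2n-1});
\]
after pulling out the factor of $2$ from $n=1$ on the right and using $(1-q^{2n})(1+q^{2n}) = 1-q^{4n}$, this yields the classical identity $\sum_{m=0}^\infty q^{m(m+1)} = \prod_{n=1}^\infty (1-q^{4n})(1+q^{2n})$. Substituting $q \mapsto q^4$ and multiplying through by $q$, together with $(2m+1)^2 = 4m(m+1)+1$, gives
\[
\sum_{m=0}^\infty q^{(2m+1)^2} = q\prod_{n=1}^\infty (1-q^{16n})(1+q^{8n}).
\]
Since $(1-q^{16n}) \equiv (1+q^{16n}) \pmod{2}$ termwise, the right-hand side is congruent mod $2$ to the expression derived for $\phi$, establishing $\phi(z) \equiv \sum_{m \geq 0} q^{(2m+1)^2} \pmod{2}$ and hence the theorem.

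The only real subtlety is picking the right product form of $\phi$ so that the exponent $24$ reduces cleanly under the binary Frobenius; from there, the identification with the odd-squares theta series is a short calculation from a single classical identity, and I anticipate no deeper obstacle.
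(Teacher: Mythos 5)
Your proof is correct, and its second half takes a genuinely different route from the paper's. Both arguments begin identically: write $\phi(z) = q\prod_{n\ge 1}(1+q^n)^{24}$ and reduce the $24$th power modulo $2$ (the paper via the parity of $\binom{24}{i}$, you via the Frobenius congruence and $24=16+8$; the resulting factor $1+q^{8n}+q^{16n}+q^{24n}$ is the same). From there the paper rescales to recognize $\prod_{n\ge1}(1+q^{n}+q^{2n}+q^{3n})$ as the generating function for partitions with parts occurring at most thrice, and then cites Fink--Guy--Krusemeyer for the fact that the number of such partitions of $n$ is odd exactly when $n$ is triangular. You instead invoke Gauss's identity $\sum_{m\ge 0}q^{m(m+1)} = \prod_{n\ge1}(1-q^{4n})(1+q^{2n})$, obtained from the Jacobi triple product, to identify $q\prod_{n\ge1}(1+q^{8n})(1+q^{16n})$ modulo $2$ directly with the theta series $\sum_{m\ge0} q^{(2m+1)^2}$. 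Your route is self-contained up to a single classical identity, whereas the paper's leans on an external combinatorial parity theorem; the two are of course equivalent, since triangular numbers $k(k+1)/2$ correspond to odd squares via $8\cdot\frac{k(k+1)}{2}+1=(2k+1)^2$ under the same rescaling. One small expository point: when you set $z=q$ in the triple product, the left side $\sum_{n\in\Z}q^{n(n+1)}$ also carries a factor of $2$ (from the symmetry $n\leftrightarrow -n-1$) that must cancel against the $2$ you extract from the $n=1$ factor on the right; the identity you land on is nonetheless the correct one.
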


Section 2 contains the machinery and definitions we use in the proof of Theorem \ref{congruence}. The proof of Theorem \ref{congruence} is in Section  3, and the proof of Theorem \ref{oddsquares} is in Section 4.

\section{Preliminary Lemmas}

The operator $U_p$ on a function $f(z)$ is given by
\[U_p f(z) = \dfrac{1}{p} \sum_{j = 0}^{p-1} f\paren{\frac{z+j}{p}}.\]
We have $U_p : M_k^!(N) \to M_k^!(N)$ if $p$ divides $N$. If $f(z)$ has the Fourier expansion $\sum^\infty_{n = n_0}a(n)q^n$, then the effect of $U_p$ is given by $U_p f(z) = \sum^\infty_{n = n_0}a(pn)q^n$.

The following result describes how $U_p$ applied to a modular function behaves under the Fricke involution. This will help us in Lemma \ref{polynomial} to write $U_2\phi^m$ as a polynomial in $\phi$.

\begin{lemma}
\label{fricke}
\cite[Theorem~4.6]{apostol} Let $p$ be prime and let $f(z)$ be a level $p$ modular function. Then
\[p(U_pf)\paren{\frac{-1}{pz}} = p(U_pf)(pz)+f\paren{\frac{-1}{p^2z}}-f(z).\]
\end{lemma}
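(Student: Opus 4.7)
The plan is to unwind both sides using the definition of $U_p$ and then match the resulting finite sums term-by-term via the weight-$0$ invariance of $f$ under $\Gamma_0(p)$. Writing out
\[p(U_p f)(pz) = \sum_{j=0}^{p-1} f\paren{z + \tfrac{j}{p}}, \qquad p(U_p f)\paren{\tfrac{-1}{pz}} = \sum_{j=0}^{p-1} f\paren{\tfrac{jpz - 1}{p^2 z}},\]
the $j = 0$ terms contribute exactly $f(z)$ and $f(-1/(p^2z))$, respectively, so the identity reduces to showing that the two sums over $j \in \set{1,\dots,p-1}$ coincide.

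The key technical step is to produce, for each such $j$, a matrix $\gamma_j \in \Gamma_0(p)$ that realizes the required identification. Solving $\gamma_j\cdot(z + k/p) = (jpz-1)/(p^2z)$ by comparing numerators and denominators as polynomials in $z$ pins down
\[\gamma_j = \begin{pmatrix} j & (-1-jk)/p \\ p & -k \end{pmatrix},\]
which is an integral matrix whose lower-left entry is divisible by $p$ precisely when $k$ is taken to be the unique residue in $\set{1,\dots,p-1}$ with $jk \equiv -1 \pmod p$; a quick computation then shows that the determinant is automatically $1$. Weight-$0$ invariance gives $f((jpz-1)/(p^2z)) = f(z + k(j)/p)$ for this choice of $k$.

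To finish, I would observe that $j \mapsto k(j) \equiv -j\inv \pmod p$ is a bijection on $\set{1,\dots,p-1}$, so reindexing yields
\[\sum_{j=1}^{p-1} f\paren{\tfrac{jpz-1}{p^2z}} = \sum_{k=1}^{p-1} f\paren{z + \tfrac{k}{p}},\]
which, combined with the isolated $j=0$ contributions, gives the stated Fricke-type identity. I expect the main obstacle to be the reverse-engineering of $\gamma_j$: once the correct coset representatives are identified, everything else is routine, but choosing the matrix without a deliberate guess at the form $(j,\ast,p,-k)$ is the one nonobvious ingredient.
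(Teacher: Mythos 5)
Your proof is correct, and it is essentially the standard argument: the paper gives no proof of this lemma, citing Apostol's Theorem 4.6, whose proof proceeds by exactly this splitting off of the $j=0$ terms and matching of the remaining terms via matrices in $\Gamma_0(p)$. One small wording slip: the lower-left entry of your $\gamma_j$ is always $p$, hence always divisible by $p$; what the condition $jk \equiv -1 \pmod{p}$ actually buys is the integrality of the upper-right entry $(-1-jk)/p$.
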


The Fricke involution $\sm{0}{-1}{2}{0}$ swaps the cusps of $\Gamma_0(2)$, which are 0 and $\infty.$ We will use this fact in the proof of Lemma \ref{polynomial}, and the following relations between $\phi(z)$ and $\psi(z)$ will help us compute this involution.

\begin{lemma}
\label{phipsi}
\cite[Lemma~3]{andersen}
The functions $\phi(z)$ and $\psi(z)$ satisfy the relations
\begin{gather*}
\phi\paren{\frac{-1}{2z}} = 2^{-12}\psi(z),\\
\psi\paren{\frac{-1}{2z}} = 2^{12}\phi(z).
\end{gather*}
\end{lemma}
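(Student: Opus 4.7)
The plan is to deduce both identities directly from the product formula $\phi(z) = \paren{\eta(2z)/\eta(z)}^{24}$ together with the classical transformation law of the Dedekind eta function,
\[
\eta\paren{\frac{-1}{w}} = \sqrt{-iw}\,\eta(w).
\]
Substituting $z \mapsto -1/(2z)$ into $\phi$, the numerator argument $2z$ becomes $-1/z$ and the denominator argument becomes $-1/(2z)$, so I would evaluate $\eta(-1/z)$ and $\eta(-1/(2z))$ by applying the transformation with $w = z$ and $w = 2z$ respectively. Raising the resulting quotient to the 24th power eliminates the $\sqrt{\cdot}$ and produces an overall factor $(-iz)^{12}/(-2iz)^{12} = 2^{-12}$, while the remaining eta quotient inverts to $(\eta(z)/\eta(2z))^{24} = \psi(z)$. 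This gives $\phi(-1/(2z)) = 2^{-12}\psi(z)$.

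For the second identity, since $\psi = 1/\phi$ by definition, I would simply invert the first identity: $\psi(-1/(2z)) = 1/\phi(-1/(2z)) = 2^{12}/\psi(z) = 2^{12}\phi(z)$.

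There is essentially no obstacle here; the only subtlety is choosing a consistent branch of $\sqrt{-iw}$ when applying the eta transformation twice, but since both occurrences are raised to the 24th power, the branch ambiguity disappears and the computation reduces to tracking the factor $2^{-12}$. The entire argument is a short direct calculation, which is why Andersen and Jenkins state it as a one-line lemma.
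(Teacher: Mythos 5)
Your proof is correct and is exactly the standard computation: the paper gives no proof of this lemma, citing Andersen--Jenkins instead, and the cited proof is the same application of the eta transformation $\eta(-1/w)=\sqrt{-iw}\,\eta(w)$ to the quotient defining $\phi$, with the 24th power killing the branch ambiguity and leaving the factor $2^{-12}$. Nothing further is needed.
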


The following lemma is a special case of a result from one of Lehner's papers \cite{lehner2}. It provides a polynomial whose roots are modular forms used in the proof of Theorem \ref{betterbound}.

\begin{lemma}
\label{polyrelation}
\cite[Theorem~2]{lehner2}
There exist integers $b_j$ such that
\[U_2\phi(z) = 2 (b_1 \phi(z)+ b_2 \phi(z)^2).\]
Furthermore, let $h(z) = 2^{12}\phi(z/2)$, $g_1(z) = 2^{14}\paren{ b_1 \phi(z) + b_2 \phi(z)^2 }$, and $g_2(z) = -2^{14}b_2 \phi(z)$. Then
\[h(z)^2 - g_1(z)h(z)+g_2(z) = 0.\]
\end{lemma}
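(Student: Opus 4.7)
The plan is to prove the first claim by identifying $U_2\phi$ as an element of $M_0^\flat(2)$ with a controlled pole at $0$, forcing it to be a quadratic polynomial in $\phi$ with even integer coefficients, and to prove the second claim by recognizing $h(z) = 2^{12}\phi(z/2)$ as one of the two conjugate roots of a quadratic with coefficients in $\Z[\phi(z)]$.

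For the first statement, apply Lemma \ref{fricke} with $f = \phi$ to get
\[2(U_2\phi)\paren{\frac{-1}{2z}} = 2(U_2\phi)(2z) + \phi\paren{\frac{-1}{4z}} - \phi(z).\]
Rewriting $-1/(4z) = -1/(2\cdot 2z)$ and applying Lemma \ref{phipsi} yields $\phi\paren{-1/(4z)} = 2^{-12}\psi(2z) = 2^{-12}q^{-2} + O(1)$, while the remaining terms on the right-hand side are holomorphic at $\infty$. Thus $(U_2\phi)\paren{-1/(2z)}$ has a pole of order exactly $2$ at $\infty$, which transfers under the Fricke involution into $U_2\phi$ having a pole of order $2$ at the cusp $0$. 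Combined with $U_2\phi = 24q + O(q^2)$ being holomorphic on the upper half-plane and vanishing at $\infty$, and the fact that $\set{\phi^m}_{m\geq 0}$ is a $\Z$-basis of $M_0^\flat(2)$, this forces $U_2\phi = c_1\phi + c_2\phi^2$ for some integers $c_1, c_2$. Matching the $q$- and $q^2$-coefficients of both sides using the known $q$-expansion of $\phi$ yields $c_1 = 24$ and $c_2 = 2048$, both even, so we may take $b_1 = 12$ and $b_2 = 1024$.

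For the second statement, set $h_0(z) = h(z) = 2^{12}\phi(z/2)$ and $h_1(z) = 2^{12}\phi\paren{(z+1)/2}$. By the definition of $U_2$,
\[h_0(z) + h_1(z) = 2^{13}(U_2\phi)(z) = 2^{14}\paren{b_1\phi(z) + b_2\phi(z)^2} = g_1(z).\]
For the product, substitute into the eta-product representation $\phi(z) = q\prod_{n\geq 1}(1+q^n)^{24}$ and pair factors by parity of $n$ in $\phi(z/2)\phi((z+1)/2)$, using $\prod_{n\geq 1}(1+q^n)^{48} = q^{-2}\phi(z)^2$ and $\prod_{n \text{ odd}}(1-q^n)^{24} = q\phi(z)\inv$ to simplify to $\phi(z/2)\phi((z+1)/2) = -\phi(z)$. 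Consequently $h_0(z)h_1(z) = -2^{24}\phi(z) = -2^{14}b_2\phi(z) = g_2(z)$, so $h = h_0$ is a root of $X^2 - g_1(z)X + g_2(z)$.

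The main obstacle is verifying the product identity $\phi(z/2)\phi((z+1)/2) = -\phi(z)$; if the direct eta-product manipulation is unappealing, one can alternatively check equality by noting that both sides are modular functions for a common congruence subgroup whose divisors on the associated modular curve agree and whose leading Fourier coefficients match.
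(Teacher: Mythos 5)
The paper does not actually prove this lemma; it is quoted from Lehner \cite[Theorem~2]{lehner2} without proof, so there is no internal argument to compare against. Your proof is correct and self-contained, and it uses exactly the tools the paper deploys elsewhere: the first part (Fricke involution via Lemma \ref{fricke} and Lemma \ref{phipsi} to bound the pole at $0$, then expansion in the basis $\set{\phi^m}$) is the same argument the authors give for the more general Lemma \ref{polynomial}, and the second part (realizing $h_0(z)=2^{12}\phi(z/2)$ and $h_1(z)=2^{12}\phi((z+1)/2)$ as the two roots of $X^2-g_1X+g_2$, with $h_0+h_1=2^{13}U_2\phi$) is precisely the setup the authors later use in the proof of Theorem \ref{betterbound}. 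Your numerical values check out: $\phi=q+24q^2+300q^3+2624q^4+\cdots$ gives $c_1=24$, $c_2=2624-24^2=2048$, hence $b_1=12$, $b_2=2^{10}$, which matches the paper's $g_1=2^{16}\cdot 3\phi+2^{24}\phi^2$ and $g_2=-2^{24}\phi$; and the product identity $\phi(z/2)\phi((z+1)/2)=-\phi(z)$ follows correctly from pairing the factors of $q\prod(1+q^n)^{24}$ by parity together with $\prod_{n\ \mathrm{odd}}(1-q^n)=\prod_n(1+q^n)\inv$. The only point worth making explicit is that the consistency of $g_2=-2^{14}b_2\phi$ with $h_0h_1=-2^{24}\phi$ uses the computed value $b_2=2^{10}$, which you do verify.
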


In the following lemma, we extend the result from the first part of Lemma \ref{polyrelation}, writing $U_2\phi^m$ as an integer polynomial in $\phi$. In particular, we give the least and greatest powers of the polynomial's nonzero terms.

\begin{lemma}
\label{polynomial}
For all $m \geq 1$, $U_2 \phi^m \in \Z[\phi].$ In particular,
\[U_2\phi^m = \sum_{j = \ceil{m/2}}^{2m} d(m,j)\phi^j\] where $d(m,j) \in \Z$, and $d(m, \ceil{m/2})$ and  $d(m,2m)$ are not 0.
\end{lemma}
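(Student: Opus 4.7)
The plan is to pin down the behavior of $U_2\phi^m$ at the two cusps of $\Gamma_0(2)$ and then use the fact that $\phi$ is a Hauptmodul to conclude that $U_2\phi^m$ is an integer polynomial in $\phi$ with the specified range of exponents.

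The behavior at $\infty$ can be read off directly from the $q$-expansion. Since $\phi^m = q^m + 24m\,q^{m+1} + \cdots$ has integer Fourier coefficients and $U_2$ extracts the even-indexed coefficients, $U_2\phi^m$ is a power series in $q$ with integer coefficients whose leading term sits at $q^{\ceil{m/2}}$. When $m$ is even this leading coefficient equals $1$, and when $m$ is odd it equals the coefficient of $q^{m+1}$ in $\phi^m$, namely $24m$; either way it is nonzero, which will force $d(m,\ceil{m/2}) \neq 0$.

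Next I would apply Lemma \ref{fricke} with $p = 2$ and $f = \phi^m$ to probe the cusp $0$. Rewriting $\phi^m\paren{\frac{-1}{4z}} = \phi^m\paren{\frac{-1}{2\cdot 2z}} = 2^{-12m}\psi(2z)^m$ via Lemma \ref{phipsi}, this yields
\[2(U_2\phi^m)\paren{\frac{-1}{2z}} = 2(U_2\phi^m)(2z) + 2^{-12m}\psi(2z)^m - \phi(z)^m.\]
Of the three terms on the right, only the middle one contributes negative powers of $q$, and its $q$-expansion begins with $2^{-12m}q^{-2m}$. Hence the left side has a pole of exact order $2m$ at $\infty$, which by the Fricke involution swapping the two cusps of $\Gamma_0(2)$ translates into $U_2\phi^m$ having a pole of exact order $2m$ at the cusp $0$. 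This both bounds the exponents from above and forces $d(m,2m) \neq 0$.

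Finally, since $\phi$ is holomorphic on the upper half plane, so is $\phi^m$, and hence so is $U_2\phi^m$, meaning that $U_2\phi^m$ is a modular function on $\Gamma_0(2)$ whose only poles lie at the cusps. Combining the cusp bounds with the fact that the field of modular functions on $\Gamma_0(2)$ is $\mathbb{C}(\phi)$, together with $\phi^j$ vanishing to order $j$ at $\infty$ and having a pole of order $j$ at $0$, forces $U_2\phi^m = \sum_{j=\ceil{m/2}}^{2m} d(m,j)\phi^j$ with $d(m,j) \in \mathbb{C}$. Integrality then follows by recursive matching: each $\phi^j = q^j + O(q^{j+1})$ has integer coefficients with leading coefficient $1$, so the lowest-order $q$-coefficient of $U_2\phi^m$ identifies $d(m,\ceil{m/2}) \in \Z$, and iterating after subtracting $d(m,\ceil{m/2})\phi^{\ceil{m/2}}$ produces each subsequent $d(m,j) \in \Z$. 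The main obstacle is the cusp-$0$ analysis; once the Fricke identity is correctly combined with Lemma \ref{phipsi}, the remaining structural deductions are routine.
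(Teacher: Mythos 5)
Your proposal is correct and follows essentially the same route as the paper: both use Lemma \ref{fricke} together with Lemma \ref{phipsi} to show that $U_2\phi^m$ has a pole of exact order $2m$ at the cusp $0$, invoke the Hauptmodul property to get a polynomial in $\phi$ of degree $2m$, read off the leading exponent $\ceil{m/2}$ from the $q$-expansion at $\infty$, and obtain integrality by matching integer Fourier coefficients against the monic expansions of the $\phi^j$. The only cosmetic difference is that the paper first writes $U_2\phi(-1/2z)^m$ as a polynomial in $\psi(z)$ and then substitutes $z\mapsto -1/2z$, whereas you argue directly with the pole order at $0$.
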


\begin{proof}
Using Lemmas \ref{fricke} and \ref{phipsi}, we have that

\begin{align*}
U_2\phi(-1/2z)^m
&= U_2\phi(2z)^m + 2\inv\phi(-1/4z)^m-2\inv \phi(z)^m \\
&= U_2\phi(2z)^m + 2^{-1-12m} \psi(2z)^m - 2\inv \phi(z)^m\\
&= 2^{-1-12m}q^{-2m} + O(q^{-2m+2})\\
2^{1+12m}U_2\phi(-1/2z)^m &= q^{-2m} + O(q^{-2m+2}).
\end{align*}
Because $\phi(z)^m$ is holomorphic at $\infty,$ $U_2 \phi(z)^m$ is holomorphic at $\infty$. So $U_2 \phi(-1/2z)^m$ is  holomorphic at 0 and, since it starts with $q^{-2m}$, must be a polynomial of degree $2m$ in $\psi(z).$ Let $b(m,j) \in \Z$ such that
\[
2^{1+12m}U_2\phi(-1/2z)^m = \sum_{j = 0}^{2m} b(m,j)\psi(z)^j,
\]
and we note that $b(m, 2m)$ is not 0. Now replace $z$ with $-1/2z$ and use Lemma \ref{phipsi} to get
\[
2^{1+12m}U_2\phi(z)^m = \sum_{j = 0}^{2m} b(m,j)2^{12j}\phi(z)^j,
\]
which gives
\[
U_2\phi(z)^m = \sum_{j = 0}^{2m} b(m,j)2^{12(j-m)-1}\phi(z)^j.
\]
If $m$ is even, the leading term of the above sum is $q^{m/2}$, and if $m$ is odd, the leading term is $q^{(m+1)/2}$, so the sum starts with $j = \ceil{m/2}$ as desired. Notice that $b(m,j)2^{12(j-m)-1}$ is an integer because the coefficients of $\phi(z)^m$ are integers.
\end{proof}

We may repeatedly use Lemma \ref{polynomial} to write $U_2^\alpha \phi^m$ as a polynomial in $\phi$. Let
\begin{equation}
\label{fdefn}
f(\ell) = \ceil{\ell/2},\ f^0(\ell) = \ell, \text{ and } f^k(\ell) = f(f^{k-1}(\ell)).
\end{equation}
Using Lemma \ref{polynomial}, the smallest power of $q$ appearing in $U_2^\alpha \phi^m$ is $f^\alpha(m).$ Lemma \ref{binarygammma} provides a connection between $\gamma(m,\alpha)$ and the integers $f^\alpha(m).$
\begin{lemma}
\label{binarygammma}
The function $\gamma(m,\alpha)$ as defined in Theorem \ref{congruence} is equal to the number of odd integers in the list
\[m, f(m), f^2(m), \dots, f^{\alpha -1}(m). \]
\end{lemma}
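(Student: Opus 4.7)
The plan is to obtain a closed form for $f^\alpha(m)$ and then match the parity of these values against the increment $\gamma(m,\alpha+1) - \gamma(m,\alpha)$. First I would show by induction on $\alpha$ that $f^\alpha(m) = \ceil{m/2^\alpha}$. The base case $\alpha = 0$ is immediate, and the inductive step follows from the standard identity $\ceil{\ceil{x}/n} = \ceil{x/n}$ for real $x$ and positive integer $n$, applied with $n = 2$ to $x = m/2^\alpha$.

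Next I would determine when $\ceil{m/2^\alpha}$ is odd in terms of the binary digits $a_i$ of $m$. If $2^\alpha \mid m$ (equivalently $a_1 = \cdots = a_\alpha = 0$), then $\ceil{m/2^\alpha} = m/2^\alpha$ has binary expansion $a_r\cdots a_{\alpha+1}$, so it is odd if and only if $a_{\alpha+1} = 1$. Otherwise $\ceil{m/2^\alpha} = \lfloor m/2^\alpha\rfloor + 1$, and since $\lfloor m/2^\alpha\rfloor$ has last bit $a_{\alpha+1}$, the ceiling has the opposite last bit and is odd if and only if $a_{\alpha+1} = 0$.

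Then I would analyze the increment $\gamma(m,\alpha+1) - \gamma(m,\alpha)$ by cases on whether $i'$ exists for $\alpha$. If $i'$ does not exist (i.e., $a_1 = \cdots = a_\alpha = 0$), then $\gamma$ jumps from $0$ to $1$ precisely when $a_{\alpha+1} = 1$, because the newly appearing $i' = \alpha + 1$ triggers the $+1$ constant in the definition with no zeros above it; otherwise $\gamma$ stays at $0$. If $i'$ exists for $\alpha$, it is unchanged when we pass to $\alpha+1$, and the count of zeros with $i' < i \le \alpha+1$ grows by $1$ precisely when $a_{\alpha+1} = 0$. In all four cases, the increment of $\gamma$ matches the parity condition of the previous paragraph, so $\gamma(m,\alpha+1) - \gamma(m,\alpha) = [f^\alpha(m) \text{ is odd}]$.

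Finally, letting $c(m,\alpha)$ denote the number of odd integers in $m, f(m), \ldots, f^{\alpha - 1}(m)$, we trivially have $c(m,\alpha+1) - c(m,\alpha) = [f^\alpha(m) \text{ is odd}]$ together with $c(m,0) = 0 = \gamma(m,0)$, so the two sequences have identical increments and base value, and induction on $\alpha$ yields $c(m,\alpha) = \gamma(m,\alpha)$. The main obstacle is the careful bookkeeping in the case analysis for $\gamma$, particularly the transitional case in which $i'$ newly comes into existence, where the $+1$ constant in the definition of $\gamma$ must be paired correctly with the parity jump of $f^\alpha(m)$.
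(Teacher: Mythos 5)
Your proof is correct, but it follows a genuinely different route from the paper's. The paper argues dynamically on the binary expansion: applying $f$ to an even number deletes a trailing $0$, while applying $f$ to an odd number computes $(m+1)/2$, which deletes $a_1$ and propagates a carry leftward, flipping $1$s to $0$s until it consumes the first available $0$; each $0$ to the left of $a_{i'}$ therefore produces exactly one odd element of the orbit, and the initial odd occurrence accounts for the $+1$ in the definition of $\gamma$. You instead derive the closed form $f^\alpha(m)=\ceil{m/2^\alpha}$ from the nested-ceiling identity, read off the parity of $\ceil{m/2^\alpha}$ directly from the digit $a_{\alpha+1}$ (split according to whether $2^\alpha\mid m$), and then match increments of $\gamma(m,\cdot)$ against the indicator $[f^\alpha(m)\text{ odd}]$, finishing by telescoping from the common base value $\gamma(m,0)=0$. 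Your version trades the paper's carry-propagation picture (which explains combinatorially why each zero digit above $i'$ contributes one odd term) for a fully local and easily verifiable parity criterion; it avoids the somewhat informal ``propagates a 1 leftward'' step and makes the case analysis mechanical. All four cases in your increment comparison pair up correctly with the two parity cases ($2^\alpha\mid m$ versus $2^\alpha\nmid m$ corresponding exactly to $i'$ not existing versus existing), so there is no gap.
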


\begin{proof}
Write the binary expansion of $m$ as $a_r\dots a_2 a_1$, and consider its first $\alpha$ digits, $a_\alpha \dots a_2 a_1$, where $a_ i = 0$ for $i > r$ if $\alpha > r.$ If all $a_i = 0$, then all of the integers in the list are even. Otherwise, suppose that $a_i = 0$ for $1 \leq i < i'$ and $a_{i'} =1.$ Apply $f$ repeatedly to $m$, which deletes the beginning 0s from the expansion, until $a_{i'}$ is the rightmost remaining digit; that is, $f^{i'-1}(m) = a_\alpha \dots a_{i'-1}a_{i'}.$ In particular, this integer is odd. Having reduced to the odd case, we now treat only the case where $m$ is odd.

If $m$ in the list is odd, then $a_1 = 1,$ which corresponds to the $+1$ in the definition of $\gamma(m,\alpha).$ Also, $f(m) = \ceil{m/2} = (m+1)/2.$ Applied to the binary expansion of $m$, this deletes $a_1$ and propagates a 1 leftward through the binary expansion, flipping 1s to 0s, and then terminating upon encountering the first 0 (if it exists), which changes to a 1. As in the even case, we apply $f$ repeatedly to delete the new leading 0s, producing one more odd output in the list once all the 0s have been deleted. Thus, each 0 to the left of $a_{i'}$ corresponds to one odd number in the list.
\end{proof}

\section{Proof of the Main Theorem}

Theorem \ref{congruence} will follow from the following theorem.

\begin{restatable}{theorem}{betterbound}
\label{betterbound}
Let $f(\ell)$ be as in (\ref{fdefn}). Let $\gamma(m,\alpha)$ be as in Theorem \ref{congruence}, and let $\alpha \geq 1$. Define
\[
c(m,j,\alpha) =
\begin{cases}
-1 & \text{if } f^{\alpha - 1}(m) \text{ is even and is not }2j, \\
0 & \text{otherwise}.
\end{cases}
\]
Write $U_2^\alpha \phi^m =\sum\limits_{j = f^\alpha(m)}^{2^\alpha m}d(m,j,\alpha)\phi^j.$ Then
\begin{equation}
\label{goodresult}
\nu_2(d(m,j,\alpha)) \geq 8(j - f^\alpha(m)) + 3 \gamma(m,\alpha) + c(m,j,\alpha).
\end{equation}
\end{restatable}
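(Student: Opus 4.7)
The plan is to prove Theorem \ref{betterbound} by induction on $\alpha$, starting from a base case $\alpha = 1$ that is itself proved by induction on $m$ using a recurrence derived from Lemma \ref{polyrelation}. Setting $h_0(z) = 2^{12}\phi(z/2)$ and $h_1(z) = 2^{12}\phi((z+1)/2)$, both $h_0$ and $h_1$ are roots of $X^2 - g_1 X + g_2$, so $h_0+h_1 = g_1$, $h_0 h_1 = g_2$, and $P_m := h_0^m + h_1^m$ satisfies $P_m = g_1 P_{m-1} - g_2 P_{m-2}$. Since $P_m = 2^{12m+1} U_2\phi^m$, this yields $U_2\phi^m = (2^{-12}g_1)\,U_2\phi^{m-1} - (2^{-24}g_2)\,U_2\phi^{m-2}$. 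A direct computation of $U_2\phi = 24\phi + 2048\phi^2$ from the $q$-expansion of $\phi$ identifies $b_1 = 12$ and $b_2 = 2^{10}$, and the recurrence becomes $U_2\phi^m = (48\phi + 4096\phi^2)U_2\phi^{m-1} + \phi\,U_2\phi^{m-2}$. Reading off coefficients,
\[
d(m,j,1) = 48\,d(m-1,j-1,1) + 4096\,d(m-1,j-2,1) + d(m-2,j-1,1),
\]
where the multipliers $48$, $4096$, $1$ have $\nu_2$ equal to $4$, $12$, $0$. Induction on $m$, split by the parity of $m$ and by whether $j$ equals $\ceil{m/2}$, $\ceil{m/2}+1$, or is larger, then verifies $\nu_2(d(m,j,1)) \geq 8(j-\ceil{m/2}) + 3[m \text{ odd}] + c(m,j,1)$, starting from $U_2\phi^0 = 1$ and $U_2\phi = 24\phi + 2048\phi^2$.

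For the inductive step on $\alpha$, the identity $U_2^{\alpha+1}\phi^m = U_2(U_2^\alpha\phi^m)$ combined with the base case applied to each $U_2\phi^j$ gives
\[
d(m,i,\alpha+1) = \sum_{j = f^\alpha(m)}^{2^\alpha m} d(m,j,\alpha)\,d(j,i,1).
\]
Since the $2$-adic valuation of a sum is at least the minimum of the summands' valuations, it suffices to prove the termwise bound $\nu_2(d(m,j,\alpha)) + \nu_2(d(j,i,1)) \geq 8(i - f^{\alpha+1}(m)) + 3\gamma(m,\alpha+1) + c(m,i,\alpha+1)$ for every $j$. Substituting the inductive hypothesis and the base case, and using $f^{\alpha+1}(m) = \ceil{f^\alpha(m)/2}$ together with Lemma \ref{binarygammma}'s identity $\gamma(m,\alpha+1) = \gamma(m,\alpha) + [f^\alpha(m) \text{ odd}]$, the required inequality collapses to
\[
8(\lfloor j/2\rfloor - \lfloor f^\alpha(m)/2\rfloor) + 3([j \text{ odd}] - [f^\alpha(m) \text{ odd}]) + c(m,j,\alpha) + c(j,i,1) - c(m,i,\alpha+1) \geq 0,
\]
which is verified by splitting on whether $j = f^\alpha(m)$, $j = f^\alpha(m)+1$, or $j \geq f^\alpha(m)+2$. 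The only tight case is $j = f^\alpha(m)$, where one checks directly from the definitions that $c(m,j,\alpha) = 0$ (either $f^{\alpha-1}(m)$ is odd, or $f^{\alpha-1}(m) = 2j$) and that $c(j,i,1) = c(m,i,\alpha+1)$ regardless of the parity of $f^\alpha(m)$; the remaining cases have enough slack from the $8(\lfloor j/2\rfloor - \lfloor f^\alpha(m)/2\rfloor)$ and $3[j \text{ odd}]$ terms to absorb any negative $c$-contributions.

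The main obstacle is the precise calibration of the $c$-term corrections: the definition of $c(m,j,\alpha)$ is tuned exactly so that in the tight $j = f^\alpha(m)$ summand of the inductive step, its contribution matches $c(j,i,1)$ produced by the base case, closing the induction without slack. A coarser choice of $c$ would either fail to close the induction or overstate the divisibility. The parity case analysis in the base case induction on $m$ is also delicate but mechanical once the recurrence with its specific multipliers $48$ and $4096$ is in hand.
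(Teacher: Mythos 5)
Your proposal is correct and follows essentially the same route as the paper: a base case at $\alpha=1$ proved by induction on $m$ via the Newton's-identity recurrence $S_m=g_1S_{m-1}-g_2S_{m-2}$ from Lemma \ref{polyrelation} (your explicit coefficient recurrence $d(m,j,1)=48\,d(m-1,j-1,1)+4096\,d(m-1,j-2,1)+d(m-2,j-1,1)$ is exactly the paper's ring-$R$ argument unwound), followed by induction on $\alpha$ in which each term of the convolution $\sum_j d(m,j,\alpha)d(j,i,1)$ is bounded below, with the tight case at $j=f^\alpha(m)$ where $c(m,j,\alpha)=0$ and $c(j,i,1)=c(m,i,\alpha+1)$. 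Your organization of the inductive step (cases $j=s$, $j=s+1$, $j\geq s+2$ checked against the target directly) is a slightly cleaner packaging of the paper's case analysis by the parity of $s$ and ranges of $j$, but the underlying argument is the same.
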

\noindent
The $\alpha$ of Theorem \ref{betterbound} corresponds to the $\alpha$ in $n=2^\alpha n'$ of Theorem \ref{congruence}, and because our methods use the $U_2$ operator, they do not give meaningful congruences for the case when $\alpha = 0.$ Theorem \ref{betterbound} is an improvement on the following result by Lehner \cite{lehner2}.

\begin{theorem}
\label{lehnerbound}
\cite[Equation~3.4]{lehner2}
Write $U_2 ^\alpha \phi^m$ as  $\sum d(m,j,\alpha)\phi^j \in \Z[\phi].$ Then $\nu_2(d(m,j,\alpha)) \geq 8(j-1) + 3(\alpha-m+1)+(1-m).$
\end{theorem}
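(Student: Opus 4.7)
The plan is to handle the case $\alpha = 1$ via the quadratic relation from Lemma \ref{polyrelation} and the Newton--Girard identities, then lift to arbitrary $\alpha$ by composing $U_2$ with itself.

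From Lemma \ref{polyrelation}, the polynomial $X^2 - g_1 X + g_2$ has two roots, $h_1 := 2^{12}\phi(z/2)$ and $h_2 := 2^{12}\phi((z+1)/2)$, since $h_1 + h_2 = 2 \cdot 2^{12} U_2\phi = g_1$. Let $p_m = h_1^m + h_2^m$; the definition of $U_2$ gives $p_m = 2 \cdot 2^{12m} U_2\phi^m$. Writing $p_m = \sum_j e(m,j)\phi^j$, we have $e(m,j) = 2^{12m+1}\, d(m,j,1)$, so the target $\alpha = 1$ bound is equivalent to $\nu_2(e(m,j)) \geq 8m + 8j$. The key numerical input is $\nu_2(b_1) \geq 2$ and $\nu_2(b_2) \geq 10$: the latter follows cleanly from the $\eta$-product identity $\phi(z/2)\phi((z+1)/2) = -\phi(z)$ (so $g_2 = -2^{24}\phi$ and $b_2 = 2^{10}$), and the former follows from the explicit value $a(1,2) = 24$ (so $b_1 = 12$).

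Next I would induct on $m$ using the recursion $p_m = g_1 p_{m-1} - g_2 p_{m-2}$ with $p_0 = 2$, $p_1 = g_1$. Extracting the $\phi^j$-coefficient gives
\[
e(m,j) = 2^{14} b_1\, e(m-1, j-1) + 2^{14} b_2\, e(m-1, j-2) + 2^{14} b_2\, e(m-2, j-1).
\]
By the inductive hypothesis and the bounds on $\nu_2(b_1), \nu_2(b_2)$, each of the three summands has 2-adic valuation at least $8m + 8j$ (in each case the arithmetic collapses exactly to the target), so $\nu_2(e(m,j)) \geq 8m + 8j$. The base cases $m = 0, 1$ follow directly from $p_0 = 2$ and $p_1 = 2^{14}(b_1 \phi + b_2 \phi^2)$. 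Dividing by $2^{12m+1}$ yields $\nu_2(d(m,j,1)) \geq 8j - 4m - 1$, which is the $\alpha = 1$ form of the claimed bound.

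For the induction on $\alpha$, use $U_2^{\alpha+1}\phi^m = \sum_j d(m,j,\alpha)\, U_2\phi^j$ to get $d(m,k,\alpha+1) = \sum_j d(m,j,\alpha)\, d(j,k,1)$. Combining the inductive bound at level $\alpha$ with the freshly proved level-$1$ bound, each summand satisfies
\[
\nu_2\!\paren{d(m,j,\alpha)\, d(j,k,1)} \geq 8k + 3(\alpha+1) - 4m - 4 + (4j - 4),
\]
which meets the target since $d(m,j,\alpha) \neq 0$ forces $j \geq f^\alpha(m) \geq 1$ for $m \geq 1$, so the extra $4j-4$ is nonnegative. This closes the induction. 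The main obstacle is the preliminary step of establishing the 2-adic valuations $\nu_2(b_1) \geq 2$ and $\nu_2(b_2) \geq 10$: the bound on $b_2$ drops out of the $\eta$-product identity, while the bound on $b_1$ is most readily seen from an explicit Fourier coefficient computation, although a more structural argument using the level-$2$ theory of $\phi$ would be preferable.
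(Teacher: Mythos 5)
Your proof is correct, but note that the paper itself does not prove this statement: Theorem \ref{lehnerbound} is quoted from Lehner \cite{lehner2} as a point of comparison, so there is no in-paper argument to match against. What you have written is essentially a self-contained reproof using exactly the machinery the authors deploy for their stronger Theorem \ref{betterbound}: the quadratic relation of Lemma \ref{polyrelation} with roots $h_0, h_1$, the Newton--Girard recursion $S_m = g_1 S_{m-1} - g_2 S_{m-2}$, an induction on $m$ to settle $\alpha = 1$, and then an induction on $\alpha$ via $d(m,k,\alpha+1) = \sum_j d(m,j,\alpha)\,d(j,k,1)$. Your numerical inputs check out ($2b_1 = a(1,2) = 24$ gives $\nu_2(b_1) = 2$; the eta-quotient identity $\phi(z/2)\phi((z+1)/2) = -\phi(z)$ gives $b_2 = 2^{10}$, matching the paper's $g_2 = -2^{24}\phi$), the three summands in the recursion for $e(m,j)$ each land exactly on the target valuation $8m+8j$, and the $\alpha$-induction closes because $j \geq f^\alpha(m) \geq 1$ makes the slack $4j-4$ nonnegative. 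The instructive contrast with the paper is in the bookkeeping: because Lehner's bound measures $j$ against the fixed benchmark $j=1$ rather than against the moving pole order $f^\alpha(m) = \ceil{m/2}\circ\cdots$, your level-$1$ bound composes with itself cleanly and the $\alpha$-induction is a one-line valuation count; the paper's sharper Theorem \ref{betterbound} pays for its extra precision (the $\gamma(m,\alpha)$ and $c(m,j,\alpha)$ terms tracking the parity of the iterates $f^k(m)$) with the case analysis on whether $s$ is odd or even and on the range of $j$. Your argument is a clean way to see why the weaker bound is so much easier, and it is a valid proof of the stated theorem.
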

\noindent
In particular, Lehner's bound sometimes only gives the trivial result that the 2-adic valuation of $d(m,j,\alpha)$ is greater than some negative integer.

We prove Theorem \ref{betterbound} by induction on $\alpha$. The base case is similar to Lemma 6 from \cite{andersen}, which gives a subring of $\Z[\phi]$ which is closed under the $U_2$ operator. The polynomials are useful because their coefficients are highly divisible by 2. Here, we employ a similar technique to prove divisibility properties of the polynomial coefficients in Lemma \ref{polynomial}. We then induct to extend the divisibility results to the polynomials that arise from repeated application of $U_2.$
\\

\noindent
{\em Proof of Theorem \ref{betterbound}.} For the base case, we let $\alpha = 1$, and seek to prove the statement
\[U_2\phi^m = \sum_{j = \ceil{m/2}}^{2m} d(m,j,1)\phi^j\]
with
\begin{equation}
\label{alpha1bound}
\nu_2(d(m,j,1)) \geq 8(j-\ceil{m/2}) + c(m,j)
\end{equation}
where
\begin{equation*}
c(m,j) =
\begin{cases}
3 & m \text{ is odd},\\
0 & m =2j,\\
-1&\text{otherwise.}
\end{cases}
\end{equation*}
The term $c(m,j)$ combines $c(m,j,\alpha)$ and $3\gamma(m,\alpha)$ for notational convenience. We prove (\ref{alpha1bound}) by induction on $m$.

We follow the proof techniques used in Lemmas 5 and 6 of \cite{andersen}. From the definition of $U_2$, we have
\[U_2\phi^m = 2\inv\paren{\phi\paren{\frac{z}{2}}^m + \phi\paren{\frac{z+1}{2}}^m} = 2^{-1-12m}\paren{h_0(z)^m+h_1(z)^m}\]
where $h_\ell(z) = 2^{12}\phi\paren{\frac{z+\ell}{2}}.$ To understand this form, we construct a polynomial whose roots are $h_0(z)$ and $h_1(z)$. Let $g_1(z) = 2^{16}\cdot 3\phi(z)+2^{24}\phi(z)^2$ and $g_2(z) = -2^{24}\phi(z).$ Then by Lemma \ref{polyrelation}, the polynomial $F(x) = x^2 -g_1(z)x+g_2(z)$ has $h_0(z)$ as a root. It also has $h_1(z)$ as a root because under $z \mapsto z+1,$ $h_0(z) \mapsto h_1(z)$ and the $g_\ell$ are fixed.

Recall Newton's identities for the sum of powers of roots of a polynomial. For a polynomial $\prod_{i = 1}^n (x - x_i),$ let $S_\ell = x_1^\ell + \cdots + x_n^\ell$ and let $g_\ell$ be the $\ell$th symmetric polynomial in the $x_1, \dots, x_n.$ Then
\[
S_\ell = g_1S_{\ell-1}-g_2S_{\ell-2}+\cdots+(-1)^{\ell+1}\ell g_\ell.
\]
We apply this to the polynomial $F(x),$ which has only two roots, to find that
\[h_0(z)^m + h_1(z)^m =S_m = g_1S_{m-1}-g_2S_{m-2}.\]
Furthermore,
\begin{equation}
\label{u2phipoly}
U_2\phi^m = 2^{-1-12m}S_m.
\end{equation}

Lastly, let $R$ be the set of polynomials of the form $d(1) \phi(z) + \sum_{n=2}^N d(n) \phi(z)^n$ where for $n \geq 2$, $\nu_2(d(n)) \geq 8(n-1).$
Now we rephrase the theorem statement in terms of $S_m$ and elements of $R$. When $m$ is odd, we wish to show that for some $r \in R$, $U_2\phi^m = 2^{-8(\ceil{m/2}-1)+3}r.$ Performing straightforward manipulations using (\ref{u2phipoly}), this is equivalent to $S_m = 2^{8(m+1)} r$ for some $r \in R.$ Similarly, when $m$ is even and is not $2j$, we wish to show that $U_2\phi^m = 2^{-8(\ceil{m/2}-1)-1}r$ for some $r \in R.$ This again reduces to showing that $S_m = 2^{8(m+1)}r$ for some $r \in R.$ If $m = 2j$, then (\ref{alpha1bound}) gives $8(j-\ceil{2j/2})+0 = 0,$ which means the polynomial has integer coefficients, which is true by Lemma \ref{polynomial}.

When $m=1$ or $2,$ we have that $S_m=2^{8(m+1)}r$ for some $r \in R$, as
\begin{gather*}
S_1 = g_1 = 2^{8(2)}(3 \phi + 2^{8}\phi^2),\\
S_2 = g_1S_1 - 2g_2 = 2^{8(3)}(2 \phi + 2^{8}3^2\phi^2+2^{17}\phi^3+2^{24}\phi^4).
\end{gather*}
Now assume the equality is true for positive integers less than $m$ with $m$ at least 3. Then for some $r_1, r_2 \in R$,
\begin{align*}
S_m &= g_1S_{m-1}-g_2S_{m-2}\\
&= (2^{16}(3\phi + 2^8 \phi^2))(2^{8m}r_1)+(2^{24}\phi)(2^{8(m-1)}r_2)\\
&= 2^{8(m+1)}[(3\cdot2^8\phi+2^{16}\phi^2)r_1+2^8\phi r_2],
\end{align*}
completing the proof where $\alpha = 1.$

Assume the theorem is true for $U_2^\alpha \phi^m =\sum\limits_{j = s}^{2^\alpha m}d(j)\phi^j,$ meaning
\begin{equation}
\label{inductivehypothesis}
\nu_2(d(j)) \geq 8(j - f^\alpha(m)) + 3 \gamma(m,\alpha) + c(m,j,\alpha).
\end{equation}
Note that $s = f^\alpha(m).$ Letting $s' = f(s)$ and $U_2 \phi^j = \sum_{i = \ceil{j/2}}^{2j} b(j,i) \phi^i,$ we define $d'(j)$ as the integers satisfying the following equation:
\begin{align}
U_2^{\alpha + 1} \phi^m &= U_2 \left( \sum^{2^{\alpha} m}_{j = s} d(j) \phi^j \right) \nonumber\\
&= \sum^{2^{\alpha} m}_{j = s} d(j) U_2 \phi^j \nonumber\\
&= \sum^{2^{\alpha} m}_{j = s} \sum^{2 j}_{i = \ceil{j/2}} d(j)b(j,i) \phi^i \nonumber\\
&= \sum^{2^{\alpha + 1} m}_{j = s'} d'(j) \phi^j \label{djdjprime}.
\end{align}
We wish to prove that
\begin{equation}
\label{inductiveconclusion}
\nu_2(d'(j)) \geq 8(j - f^{\alpha+1}(m))+3\gamma(m,\alpha+1) + c(m,j,\alpha+1).
\end{equation}
We will prove inequalities that imply (\ref{inductiveconclusion}). Observe that
\begin{flalign*}
&&
c(m,j,\alpha + 1) &=
\begin{cases}
-1 &\text{if } s \text{ is even and not } 2j,\\
0 &\text{if } s\text{ is odd or }s = 2j,
\end{cases} &&\\
\text{and} && & &&
\\
&& \gamma(m,\alpha+1)&=
\begin{cases}
\gamma(m,\alpha) &\text{if } s \text{ is even,}\\
\gamma(m,\alpha)+1 & \text{if } s \text{ is odd.}
\end{cases} &&
\end{flalign*}

\noindent
Also, $c(m,s,\alpha) = 0$ because if $f^{\alpha-1}(m)$ is even, then $s = f^{\alpha-1}(m)/2$ so $f^{\alpha - 1}(m) = 2s.$ Therefore, $\nu_2(d(s)) \geq 3\gamma(m,\alpha)$ by (\ref{inductivehypothesis}).

If $s$ is even, we will show that
\begin{equation}
\label{theineq}
\nu_2 (d'(j)) \geq \max \left\{8\left(j - s'\right) -1 + \nu_2 (d(s)), \nu_2 (d(s)) \right\},
\end{equation}
because then if $j = s'$, we have
\begin{align*}
\nu_2(d'(s')) &\geq \nu_2(d(s))\\
&\geq 8(s' - s') + 3\gamma(m,\alpha) + c(m, s', \alpha + 1),
\end{align*}
and for all $j$,
\begin{align*}
\nu_2(d'(j))&\geq 8(j - s')+3\gamma(m,\alpha)+c(m,j,\alpha+1)\\
&= 8(j - f^{\alpha+1}(m))+3\gamma(m,\alpha+1)+c(m,j,\alpha+1),
\end{align*}
so that (\ref{theineq}) implies (\ref{inductiveconclusion}). If $s$ is odd we will show that
\begin{equation}
\label{theineq2}
\nu_2 (d'(j)) \geq 8\left(j - s'\right) +3 + \nu_2 (d(s)),
\end{equation}
because then
\begin{align*}
\nu_2(d'(j)) &\geq 8(j-s')+3\gamma(m,\alpha)+3\\
&= 8(j-s')+3(\gamma(m,\alpha)+1)\\
&=8(j-f^{\alpha+1}(m))+3\gamma(m,\alpha+1)+c(m,j,\alpha+1),
\end{align*}
which is (\ref{inductiveconclusion}).

For the sake of brevity, we treat here only the case where $s$ is odd. The case where $s$ is even has a similar proof. This case breaks into subcases. We will only show the proof where $j \leq 2s$, but the other cases are $2s < j \leq 2^{\alpha-1}m$ and $2^{\alpha - 1}m < j \leq 2^{\alpha+1}m$, using the same subcases for when $s$ is even. These subcases are natural to consider because in the first range of $j$-values, the $d(s)$ term is included for computing $d'(j)$, in the second range, there are no $d(s)$ or $d({2^{\alpha}m})$ terms, and in the third range, there is a $d({2^{\alpha}m})$ term.

Let $j \leq 2s$. Using (\ref{djdjprime}), we know that $d'(j) = \sum^{2j}_{i = s}d(i) b(i,j)$ by collecting the coefficients of $\phi^j$. Let $\delta(i)$ be given by
\[\delta(i) = \nu_2(d(i)) + \nu_2(b(i, j)).\]
Let $D = \set{\delta(i) \ | \ s \leq i \leq 2j}.$ Therefore we have
\begin{align*}
\nu_2 (d'(j)) &\geq \min \set{ \nu_2(d(i)) + \nu_2(b(i,j)) \mid s \leq i \leq 2j} \\
&= \min D.
\end{align*}
We claim that $\delta(i)$ achieves its minimum with $\delta(s),$ which proves (\ref{theineq2}). For that element of $D$, we know by inequality (\ref{alpha1bound}) that
\[\delta(s) \geq \nu_2(d(s)) + 8(j - s') + 3.\]
Now suppose $i > s$. Then every element of $D$ satisfies the following inequality:
\begin{align*}
\delta(i) &= \nu_2(d(i)) + 8\left(j - \ceil{i/2}\right) + c(i,j)\\
&\geq 8\left( i - s \right) - 1 + \nu_2(d(s)) + 8\left(j - \ceil{i/2}\right) + c(i,j) \\
&\geq 8\left(s + 1 - s + j - \ceil{(s+1)/2} \right) - 2 + \nu_2(d(s)) \\
&= 8\left(j - s' \right) + 6 + \nu_2(d(s)),
\end{align*}
but this is clearly greater than $\delta(s)$. Therefore, if $j \leq 2s$ and $s$ is odd, then $\nu_2 (d'(j)) \geq 8\left(j - s'\right) +3 + \nu_2 (d(s))$. The other cases are similar.
\qed
\\

Now Theorem \ref{congruence} follows easily from Theorem \ref{betterbound}.

\congruence*

\begin{proof}
Letting $j = f^\alpha(m)$ in (\ref{goodresult}), the right hand side reduces to
\[3\gamma(m,\alpha) + c(m,f^\alpha(m),\alpha).\]
Notice that $c(m,f^\alpha(m),\alpha)=0,$ because if $f^{\alpha-1}(m)$ is even, then $f^\alpha(m)=f^{\alpha-1}(m)/2$ so $f^{\alpha - 1}(m) = 2f^\alpha(m).$ The right hand side of (\ref{goodresult}) is minimized when $j = f^\alpha(m),$ so we conclude that $\nu_2(a(m,2^\alpha n')) \geq 3 \gamma(m,\alpha).$
\end{proof}

\section{The Parity of $a(1,n)$}

Table \ref{oddcoeffs} contains all odd coefficients of $\phi(z) = \sum_{n = 1}^\infty a(1,n)q^n$ up to $n = 225.$ The table shows that, up to $n = 225,$ the coefficient $a(1,n)$ is odd if and only if $n$ is an odd square. This holds in general.

\oddsquares*

\begin{proof}
Substitute $\eta(z)$ into the definition of $\phi(z):$
\[\phi(z) = \left(\frac{\eta(2z)}{\eta(z)}\right)^{24} = \left(\frac{ q^{2/24} \prod\limits_{n=1}^{\infty} (1-q^{2n}) }{ q^{1/24} \prod\limits_{n=1}^{\infty} (1-q^{n})}\right)^{24}.\]
By recognizing that $(1-q^{2n}) = (1- q^n)(1+ q^n)$ and simplifying, it is easy to see that
\[ \phi(z) = q \prod_{n=1}^{\infty} (1+q^{n})^{24}.\]
Reducing this mod 2, the odd coefficients will be the only nonzero terms. But $\binom{24}{i}$ is odd if and only if $i = 0,8,16,24$. It follows that
\[ \phi(z) \equiv q \prod_{n=1}^{\infty} (1+q^{8n} + q^{16n} + q^{24n}) \pmod{2}.\]
Immediately, it is clear that the coefficient of $q^n$ in the Fourier expansion of $\phi(z)$ is even if $n \not \equiv 1 \pmod{8}$.

Note that the coefficient of $q^n$ in the product $\prod_{n=1}^{\infty} (1+q^{n} + q^{2n} + q^{3n})$ is odd if and only if the coefficient of $q^{8n + 1}$ is odd in the Fourier expansion of $\phi(z)$. Furthermore, this product can be interpreted as the generating function for the number of partitions of $n$ where each part is repeated at most 3 times. The $n$th coefficient of the generating function is equivalent mod 2 to $T_n$ of \cite{partitions}. Theorem 2.1 of \cite{partitions} shows that $n$ is a triangular number if and only if $T_n$ is odd. Therefore, the coefficient of $q^n$ is odd in the Fourier expansion of $\phi(z)$ if and only if
\[n = 8\frac{k(k+1)}{2} + 1 = 4k^2 + 4k + 1 = (2k+1)^2,\]
meaning that $n$ is an odd square.
\end{proof}

\begin{table}[hpbt]
\begin{tabular}{|l|l|}
\hline
$n$ & $a(1,n)$ \\ \hline
1 & 1                                                       \\ \hline
9 & 10400997                                                \\ \hline
25 & 254038914924791                                        \\ \hline
49 & 8032568516459357451913                                 \\ \hline
81 & 288274504516836871723618295721                         \\ \hline
121 & 11156646861439805613118172199024038253                \\ \hline
169 & 453988290543887189391963063089337222684846687         \\ \hline
225 & 19146547947132951990683661128349583597266368489785587 \\ \hline
\end{tabular}
\caption{All odd coefficients of $\phi(z)$ up to $n = 225.$}
\label{oddcoeffs}
\end{table}

\bibliographystyle{amsplain}

\end{document}